\documentclass[12pt]{amsart}

\usepackage[utf8]{inputenc}
\usepackage{etex}
\usepackage{tabularx}
\usepackage{amsmath, amssymb, amsthm, color}
\usepackage{tikz}
\usepackage{xifthen, graphicx}
\usepackage{enumerate}
\usepackage{graphicx}%
\usepackage{multirow}%
\usepackage{amsmath,amssymb,amsfonts}%
\usepackage{amsthm}%
\usepackage{mathrsfs}%
\usepackage[title]{appendix}%
\usepackage{xcolor}%
\usepackage{textcomp}%
\usepackage{manyfoot}%
\usepackage{booktabs}%
\usepackage{algorithm}%
\usepackage{algorithmicx}%
\usepackage{algpseudocode}%
\usepackage{listings}%
\usepackage{tikz}
\usepackage{bbm}

\theoremstyle{definition}
\newtheorem{thm}{Theorem}

\newtheorem{ex}{Example}

\newtheorem{prop}[thm]{Proposition}

\title{On the Coxeter Cohomology of Irreducible Representations of Symmetric Groups}
\author{Hayley Bertrand*}
\address{University of Wisconsin - Whitewater}
\email{bertranh@uww.edu}
\author{Jing Anne McLaughlin}

\begin{document}

\begin{abstract}
This work is part of a research program to compute the Hochschild homology groups $\text{HH}_*(\mathbb{C}[x_1, ..., x_d]/(x_1, ..., x_d)^3; \mathbb{C})$ via Coxeter cohomology, which utilizes the isomorphism

\[ \text{HH}_i(\mathbb{C}[x_1, ..., x_d]/(x_1, ..., x_d)^3; \mathbb{C}) \cong \sum_{0 \leq j \leq i} H_C^j\bigg( S_{i+j}, V^{\otimes(i+j)} \bigg)\]

\noindent provided by Larsen and Lindenstrauss. Here, $H^*_C$ denotes Coxeter cohomology, $S_{i+j}$ is the symmetric group on $i+j$ letters, and $V$ is the standard representation of $\text{GL}_d(\mathbb{C})$ on $\mathbb{C}^d$. While previous work has focused exclusively on the case $d=2$, we extend some results to all values of $d$. Notably, we show that when the tensor representation is replaced by an irreducible representation, the Euler characteristic of the corresponding Coxeter cohomology is given by a polynomial with bounded degree. Although the problem is motivated by algebra and topology, the solution relies primarily on combinatorial arguments.

\end{abstract}

\maketitle

\section{Introduction}
\label{intro}
The motivation for this work is to make progress toward the computation of the Hochschild homology of a certain family of quotients of polynomial rings; Hochschild homology, a well-studied invariant for rings, is of interest to both algebraists and algebraic topologists. More specifically, we would like to compute the Hochschild homology of rings of the form $k[x_1, ..., x_d]/(x_1, ..., x_d)^3$, especially in the case where $k = \mathbb{C}$. In \cite{L}, Lindenstrauss establishes an isomorphism which relates the Hochschild homology HH$_*(k[x_1, ..., x_d]/(x_1, ..., x_d)^a)$ for any field $k$ of characteristic 0 to the relative homology of tori modulo fat diagonals, and uses this result to compute the desired Hochschild homology in the case $a=2$. For $a \geq 3$, however, the same techniques cannot be applied due to the complexity of the combinatorics involved. 

Instead, in \cite{LL}, Larsen and Lindenstrauss lay the groundwork for \textit{Coxeter cohomology} and establish an isomorphism which involves a sum of Coxeter cohomology groups:

\[ \text{HH}_i(\mathbb{C}[x_1, .., x_d]/(x_1, ..., x_d)^3; \mathbb{C)} \cong \bigoplus_{0\leq j \leq i} H^j_C (S_{i+j}, V^{\otimes (i+j)})\]

Here $H_C^*$ denotes Coxeter cohomology, $S_{i+j}$ the symmetric group on $i+j$ letters, and $V$ the standard representation of GL$_d(\mathbb{C})$ on $\mathbb{C}^d$. This allows us to instead approach the problem via representation theory of symmetric groups.

In \cite{B}, Bertrand uses the results of \cite{LL} to compute the Euler characteristic of the relevant Coxeter cohomology groups, of the form $\chi(H_C^*(S_n, V^{\otimes n}))$ where $V$ is again the standard representation of GL$_d(\mathbb{C})$ on $\mathbb{C}^d$. With a goal of recovering the Coxeter cohomology, Bertrand then looks at Euler characteristics of the form $\chi(H_C^*(S_n, V))$ where $V$ is an irreducible representation of $S_n$. Bertrand makes use of the bijection between these irreducible representations and their associated Young Diagrams, showing that $d$ bounds the maximum number of rows of the Young diagrams which must be considered, and investigates the case $d=2$, where the corresponding Young diagrams have at most two rows. In the case where $V$ is irreducible, Bertrand finds the Euler characteristic to be given by a polynomial with certain properties:

\begin{thm}[\cite{B}, Thm. 11] \label{thm1}
Let $V_{(i,j)}$ be the irreducible representation of $S_n$ whose corresponding Young diagram is given by $\lambda = (i, j)$. There exist three polynomials, each of degree at most $j$, such that

\begin{align*}
 \chi(H_C^*(S_n, V_{(i, j)})) = 
\begin{cases}
(-1)^{i}p^j_0(i+j) & i+j \equiv 0 \text{ mod }3 \\
(-1)^{i}p^j_1(i+j) & i+j \equiv 1 \text{ mod }3 \\
(-1)^{i}p^j_2(i+j) & i+j \equiv 2 \text{ mod }3 .
 \end{cases} 
 \end{align*}

\end{thm}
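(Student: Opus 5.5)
The plan is to write the Euler characteristic as a signed sum of Kostka numbers. Then we evaluate that sum with a two-variable generating function whose denominator produces both the period $3$ and the degree bound.

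\textbf{Step 1: reduce to Kostka numbers.} Using the description of the Coxeter cochain complex from \cite{LL}, the $k$-th cochain group of $H_C^*(S_n,V)$ is a sum, over subsets $J$ of the simple transpositions with $|J|=k$, of the (sign-twisted, depending on the convention of \cite{LL}) invariants of $V$ under the parabolic subgroup $W_J$. Since the Euler characteristic only sees dimensions,
\[
\chi(H_C^*(S_n,V_\lambda))=\sum_{J}(-1)^{|J|}\dim V_\lambda^{W_J}.
\]
Parabolic subgroups of $S_n$ are Young subgroups $S_\mu$ indexed by compositions $\mu$ of $n$, with $|J|=n-\ell(\mu)$. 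Also $\dim V_\lambda^{S_\mu}=K_{\lambda\mu}$, or $K_{\lambda'\mu}$ in the sign-twisted case. Hence
\[
\chi=\sum_{\mu\models n}(-1)^{n-\ell(\mu)}K_{\lambda\mu}.
\]
Here $K_{\lambda\mu}$ is the coefficient of $x^\mu$ in $s_\lambda$. So $\chi=\Phi(s_\lambda)$ for the linear functional $\Phi(f)=\sum_\mu(-1)^{n-\ell(\mu)}[x^\mu]f$.

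\textbf{Step 2: Jacobi--Trudi.} For $\lambda=(i,j)$ we have $s_{(i,j)}=h_ih_j-h_{i+1}h_{j-1}$, so it suffices to compute $\Phi(h_ah_b)$. The coefficient $[x^\mu]h_ah_b$ counts the ways to split each part as $\mu_r=\alpha_r+\beta_r$ with $\alpha_r,\beta_r\ge 0$ and $\sum\alpha_r=a$. Put
\[
g(s,t)=\sum_{m\ge1}(-1)^{m-1}\sum_{\alpha+\beta=m}s^\alpha t^\beta ,
\]
which tracks a single part together with its sign. Then $\Phi(h_ah_b)=[s^at^b]\,\frac{1}{1-g(s,t)}$. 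The series $g$ sums to an explicit rational function, namely a combination of $\frac{s}{1+s}$ and $\frac{t}{1+t}$ divided by $s-t$. So $1/(1-g)$ is rational, and I expect its denominator to factor through something like $1+s+s^2$, or a cyclotomic-type factor in $s$, times powers of $(1-st)$ or $(1+t)$. If the sign-twisted convention applies, the same computation runs with the roles of $h$ and $e$ exchanged.

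\textbf{Step 3: extract coefficients.} Fix $b$ and expand in $t$. The coefficient of $t^b$ should be a rational function of $s$ whose poles are cube roots of unity (up to sign), with multiplicity growing by at most one for each increase of $b$. Standard partial fractions then show that $[s^a]$ is $(-1)^a$ times a quasi-polynomial in $a$ of period $3$ and degree at most $b$. Substituting $a=i,b=j$ and $a=i+1,b=j-1$, the two terms have degree at most $j$ and $j-1$ and opposite sign factors, which the minus sign in Jacobi--Trudi reconciles. Rewriting as polynomials in $n=i+j$, with the residue class of $i$ mod $3$ determined by $n$ for fixed $j$, gives the three polynomials $p_0^j,p_1^j,p_2^j$ and the factor $(-1)^i$.

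\textbf{Main obstacle.} The hard part is Step 2/3: computing $1/(1-g)$ exactly and confirming that the pole order in $s$ grows by exactly one per power of $t$. This is what pins the degree at $j$. I would first try to verify it by an induction on $b$ using the recursion obtained by multiplying through by the denominator, and check it against the small cases $j=0,1$ by direct computation.
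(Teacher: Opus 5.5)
\textbf{The gap is in Step 1.} You take the Coxeter cochain complex of \cite{LL} to be indexed by \emph{all} subsets $J$ of simple transpositions. That formula contradicts the recurrence of Prop. 3 (Cor. 4 of \cite{B}). For the trivial representation your formula gives $\chi(S_n,\mathbf 1)=\sum_{\mu\models n}(-1)^{n-\ell(\mu)}=(1-1)^{n-1}=0$ for $n\ge 2$. Prop. 3, applied to your own values, forces $\chi(S_3,\mathbf 1)=\chi(S_2,\mathbf 1)-\chi(S_1,\mathbf 1)=0-1=-1$ instead.

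The failure is global. Your single-part series sums to $g=1-\frac{1}{(1+s)(1+t)}$, so $\frac{1}{1-g}=(1+s)(1+t)$ is a polynomial with no poles at all. This is Solomon's identity $\sum_J(-1)^{|J|}\mathrm{Ind}_{W_J}^{S_n}\mathbf 1=\mathrm{sgn}$ in disguise. Your $\chi(V_\lambda)$ is therefore just the multiplicity of the sign representation, or of the trivial one in the twisted variant. The cyclotomic denominator you expect in Step 2 does not exist, so no period-$3$, degree-$j$ behaviour can come out of Steps 2--3 as written.

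\textbf{The correct complex and the repaired argument.} Prop. 3 actually encodes the complex $C^k=\bigoplus_T V^{\langle T\rangle}$, where $T$ runs over sets of $k$ \emph{pairwise commuting} (non-adjacent) simple transpositions, with ordinary invariants. Splitting on whether $s_{n-1}\in T$ forces $s_{n-2}\notin T$ and yields exactly the two terms of Prop. 3. Prop. 4(2) is Pieri's rule for $h_2$, so there is no sign twist. Hence $\mu$ should range only over compositions with all parts in $\{1,2\}$, with sign $(-1)^{\#\{r:\mu_r=2\}}$. In your notation this means $g=h_1-h_2=s+t-s^2-st-t^2$, and then
\[
\frac{1}{1-g}=\frac{1}{(1-s+s^2)-t(1-s)+t^2}=\sum_{k\ge 0}\frac{t^k(1-s-t)^k}{(1-s+s^2)^{k+1}}.
\]
So $[t^b]$ is a proper rational function of $s$ with denominator $(1-s+s^2)^{b+1}$, whose roots are $e^{\pm i\pi/3}$. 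Partial fractions then give $[s^a]$ as $(-1)^a$ times a period-$3$ quasi-polynomial in $a$ of degree at most $b$, valid for all $a\ge 0$. As a check, at $b=0$ one gets $(1+s)/(1+s^3)$, i.e.\ $1,1,0,-1,-1,0,\dots$, which matches Prop. 3. Your Jacobi--Trudi step then finishes the proof. Your ``main obstacle'' also disappears, because the pole order is read directly off the $t$-expansion.

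With this correction your route is genuinely different from \cite{B}'s, which iterates the branching recurrence of Props. 3--4 on two-row shapes. Yours gives explicit closed forms and reads the degree bound off the pole order directly.
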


The aim of this work, then, is to extend this result to $d > 2$. Unfortunately, the arguments from \cite{B} become increasingly more combinatorially complex as the number of rows in the Young diagram increases, so we instead take a different approach. Although the method differs, the result is analogous, giving a similar description for all Young diagrams:

\begin{thm}
Let $\lambda = (\lambda_1, \lambda_2, ..., \lambda_k)$ be a Young diagram of size $n$ where $\lambda_1 - \lambda_2 \geq 6$. Then there exist six polynomials, $p_0, p_1, p_2, p_3, p_4$, and $p_5$, each of degree at most $n - \lambda_1 = \sum_{i=2}^k \lambda_i$, such that \\ $\chi(H^*_C(S_n, V_{\lambda})) = p_i(n)$ where $n \equiv i \mod 6$.
\end{thm}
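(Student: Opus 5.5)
We would prove the theorem by writing the Euler characteristic as an alternating sum of Kostka numbers and then isolating the small part of the combinatorics that depends on the lower rows of $\lambda$. Some caveats first. The Coxeter complex from \cite{LL} is not restated above, so its exact shape is an assumption here. So are the period-$6$ behaviour and the role of $\lambda_1-\lambda_2\ge 6$, which are inferred from the statement. The most serious caveat is that with no restriction on the parts, the generating function is trivial, as computed at the end.

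\textbf{Step 1: Kostka expansion.} By the construction in \cite{LL}, the cochain groups of $H^*_C(S_n,V_\lambda)$ are sums of invariant spaces $V_\lambda^{W_T}$. Here $T$ runs over admissible subsets of the simple transpositions, and $W_T$ is the corresponding Young subgroup $S_\mu$ for a composition $\mu$ of $n$. Since $\dim V_\lambda^{S_\mu}=K_{\lambda\mu}$, we get
\[ \chi(H^*_C(S_n,V_\lambda))=\sum_{\mu}\epsilon(\mu)\,K_{\lambda\mu}, \]
where $\mu$ ranges over the admissible compositions of $n$ and $\epsilon(\mu)=\pm1$ records the cohomological degree. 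We expect the admissible compositions to have parts from a restricted finite set; the mod $3$ behaviour in Theorem~\ref{thm1} suggests parts of size $1$ and $2$ and their interaction.

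\textbf{Step 2: Split off the lower rows.} Interpret $K_{\lambda\mu}$ as the number of semistandard tableaux of shape $\lambda$ and content $\mu$. Rows $2,\dots,k$ contain only $m=n-\lambda_1$ boxes. Once we fix which entries occupy those rows, row $1$ is forced: it holds the remaining entries in weakly increasing order. The semistandard condition between rows $1$ and $2$ then only involves the first $\lambda_2$ entries of row $1$. Those entries come from the smallest labels, so only boundedly many parts of $\mu$ are involved. Assuming $\lambda_1-\lambda_2\ge 6$ (our reading of its role) guarantees that this column condition is decided by a bounded initial window of $\mu$. The tail of $\mu$, where the long first row extends alone, then never interferes, and the count becomes uniform in $n$. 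So $\chi$ becomes a finite sum, over lower-row fillings, of signed counts of admissible compositions in which at most $m$ marked parts, those feeding rows $\ge2$, sit at prescribed relative positions.

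\textbf{Step 3: Generating functions.} For each lower-row pattern, the signed count of admissible compositions of $n$ with $r\le m$ marked parts has a generating function $P(t)/Q(t)^{r+1}$. Here $Q(t)=1-\sum_{a\text{ admissible}}\epsilon_a t^a$ is the generating function for the free stretches between marked parts, and $P$ is a polynomial. The key computation is to show that the reciprocal roots of $Q$ are roots of unity of order dividing $6$, coming from factors like $1+t+t^2$ and $1-t+t^2$. Partial fractions then show that the coefficient of $t^n$ is a quasi-polynomial in $n$ of period $6$ and degree at most $r\le m$. Summing over the finitely many patterns gives the six polynomials $p_0,\dots,p_5$ with $\deg p_i\le m=\sum_{i\ge2}\lambda_i$.

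\textbf{Main obstacle.} We expect the main obstacle to be Step 3. We need to pin down exactly which subsets $T$ and which signs occur, and to check that $Q$ has only sixth roots of unity as reciprocal roots, with multiplicity low enough that the degree bound $m$ holds and not something larger. The check above is essential: with all compositions allowed and $\epsilon(\mu)=(-1)^{n-\ell(\mu)}$, the generating function is $1/H(-t)=E(t)$, which would make $\chi$ trivial. So the restriction on admissible parts from \cite{LL} is exactly what produces the nontrivial period-$6$ denominator. We would first verify $Q$ directly from that restriction, and then check small cases against Theorem~\ref{thm1} for two-row shapes.
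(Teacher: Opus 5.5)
Your route (Kostka expansion plus rational generating functions) is genuinely different from the paper's and can be completed. As written, though, it leaves unproved the one fact that carries the theorem: which compositions occur, with which signs, and hence what $Q$ is.

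You do not need to guess this from the shape of the Larsen--Lindenstrauss complex; it follows from Prop.~3 alone. Unroll that recurrence down to $S_1$ and $S_0$ (where $\chi$ is just $\dim$), splitting each time on whether the last block is $\{n\}$ or $\{n-1,n\}$. Young's rule then gives
\[
\chi(S_n,V_\lambda)=\sum_{\mu}(-1)^{\#\{i:\mu_i=2\}}\dim V_\lambda^{S_\mu}=\sum_{\mu}(-1)^{\#\{i:\mu_i=2\}}K_{\lambda\mu},
\]
summed over compositions $\mu$ of $n$ with all parts in $\{1,2\}$. So your guess is right, and $Q(t)=1-t+t^2=(1+t^3)/(1+t)$. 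Its reciprocal roots are the two primitive sixth roots of unity, each simple. Hence a pattern with $r\le m$ marked labels has pole order at most $r+1\le m+1$, which gives period $6$ and degree at most $m$. As a check, $1/Q$ yields $1,1,0,-1,-1,0,\dots$, the one-row case.

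Two smaller points.

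\emph{Range of validity.} Partial fractions give the coefficient of $t^n$ as a quasi-polynomial only for $n>\deg P-\deg Q^{r+1}$. Your numerators come from the initial configurations excluded in Step 2, so you must bound them before claiming any explicit range of $n$. This is easy. Column strictness fails at $(2,c)$ exactly when the parts of $\mu$ preceding the label in that box have total size less than $c+L$. Here $L\le m$ counts lower-row entries with smaller labels. Hence in every term the numerator degree exceeds the denominator degree by at most $\lambda_2+m-1$, and the formula holds for all $n\ge\lambda_2+m$.

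\emph{The hypothesis.} You misread the role of $\lambda_1-\lambda_2\ge 6$: the window in Step 2 is bounded by $\lambda_2+m$ whatever $\lambda_1-\lambda_2$ is. In the paper, that hypothesis keeps $(\lambda_1-j,\lambda_2,\dots,\lambda_k)$, $j\le 6$, genuine diagrams. On your route it is simply not used.

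For comparison, the paper inducts on $m=n-\lambda_1$. Applying Props.~3 and~4 repeatedly, the terms with the lower rows intact telescope:
\[
\chi(V_{(\lambda_1,\dots)})=-\chi(V_{(\lambda_1-3,\dots)})+\varepsilon=\chi(V_{(\lambda_1-6,\dots)})+\varepsilon .
\]
Summing the lower-degree $\varepsilon$ along a residue class mod $6$ then raises the degree by one. The paper's $6$ is the period of $a_n=a_{n-1}-a_{n-2}$, whose characteristic polynomial is your $Q$.

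That argument is shorter and needs no tableau bookkeeping. Yours is non-inductive and reads the degree bound off as a pole order. With the bound above, it also covers every $\lambda_1\ge\lambda_2$. This sidesteps a boundary issue in the paper's induction. Its $\varepsilon$ contains diagrams such as $(\lambda_1-5,\lambda_2-1,\lambda_3,\dots)$, which need not satisfy the gap hypothesis assumed inductively.
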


\section{Preliminaries}
\label{prelim}

We begin by establishing notation, most of which is standard. Although Coxeter cohomology $H^*_C(G,V)$ is defined for any pair $(G, V)$, where $G$ is a Coxeter group and $V$ is a representation of $G$, within this work, the Coxeter group in question will always be isomorphic to some symmetric group, and we will consider only complex representations.
Because our results describe the Euler characteristic of the Coxeter cohomology in question, we frequently condense notation by writing $\chi(G, V )$ (or $\chi(V)$ if $G$ is clear from context) to denote $\chi(H_C^* (G, V))$. We work heavily with irreducible representations of symmetric groups and their associated Young diagrams; if $\lambda = (\lambda_1, \lambda_2, ..., \lambda_k)$ is a Young diagram with $\lambda_i$ boxes in row $i$, then $V_{\lambda}$ is its corresponding irreducible representation. 

We also include here the relevant results from \cite{B}. The first establishes a recurrence between Euler characteristics. 

\begin{prop}[\cite{B}, Cor. 4]
For every complex representation $V$ of $S_n$,

\[\chi(S_n,V)=\chi(S_{n-1},V|_{S_{n-1}})-\chi(S_{n-2},V^{\langle(n-1\text{ }n)\rangle})\]

where $(n-1 \text{ } n) \in S_n$ is a transposition.

\end{prop}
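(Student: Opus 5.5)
The plan is to compare the two sides term by term at the level of cochains, using the definition of the Coxeter complex from \cite{LL}. For a Coxeter system $(W,S)$ the cochain groups are indexed by subsets $T\subseteq S$. Each summand is a space of vectors in $V$ on which the parabolic subgroup $W_T$ acts in a prescribed way (invariants, or more generally a fixed isotypic component). So
\[
\chi(W,V)=\sum_{T\subseteq S}(-1)^{|T|}\dim V_T ,
\]
where $V_T\subseteq V$ is the $W_T$-component in question. I will fix this convention from \cite{LL} before starting. For $S_n$ we have $S=\{s_1,\dots,s_{n-1}\}$ with $s_i=(i\ i{+}1)$, and every $W_T$ is a Young subgroup. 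Because the formula is additive in $V$, it is enough to show that the proposed right-hand side reproduces this alternating sum.

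The first step is to split the sum according to whether $s_{n-1}\in T$. The subsets with $s_{n-1}\notin T$ are exactly the subsets of $\{s_1,\dots,s_{n-2}\}$, which is the simple system of $S_{n-1}$. For such $T$ the group $W_T$ and its action on $V$ are the same whether we view it inside $S_n$ or inside $S_{n-1}$. So this part of the sum is exactly $\chi(S_{n-1},V|_{S_{n-1}})$.

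The second step treats the subsets containing $s_{n-1}$, and splits them again according to whether $s_{n-2}\in T$. If $T=T'\cup\{s_{n-1}\}$ with $T'\subseteq\{s_1,\dots,s_{n-3}\}$, then $W_T=W_{T'}\times\langle(n-1\ n)\rangle$. Here $W_{T'}$ lies in the copy of $S_{n-2}$ that commutes with $(n-1\ n)$, so it acts on $V^{\langle(n-1\ n)\rangle}$. Taking the $W_T$-component of $V$ is then the same as first passing to the $\langle(n-1\ n)\rangle$-component and then taking the $W_{T'}$-component. The shift $|T|=|T'|+1$ supplies the minus sign, so these terms sum to $-\chi(S_{n-2},V^{\langle(n-1\ n)\rangle})$.

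The main obstacle is the remaining terms, those with $\{s_{n-2},s_{n-1}\}\subseteq T$, which must contribute zero in total. Their Young subgroups have last block of size at least $3$. My first attempt is a sign-reversing pairing on these $T$, matching compositions of $n$ whose last part $m\ge 3$ ends a maximal run with compositions in which that block is split differently. To compare the corresponding components I would decompose $V=V^{+}\oplus V^{-}$ into $\pm1$ eigenspaces of $(n-1\ n)$ and use Frobenius reciprocity: for $T=U\cup\{s_{n-2},s_{n-1}\}$, $\dim V_T$ becomes a multiplicity in $\mathrm{Res}^{S_n}_{W_U\times S_m}V$. I would then check the vanishing for the one-dimensional characters of $S_m$, and use an Euler-characteristic telescoping in $m$ to finish.

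Before committing to this, I would sanity-check the sign and invariant conventions on the trivial and sign representations for $n=2,3$. Under the naive choice $V_T=V^{W_T}$, the identity already fails for $V$ trivial and $n=3$: the left side is $1-2+1=0$ while the right side is $0-1=-1$. So the exact form of the cochains in \cite{LL} (including the convention at $n\le 1$) is what determines whether the third group of terms cancels. Pinning this down is the first thing I would do.
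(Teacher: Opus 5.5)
There is a genuine gap, and it is the one you flag at the end. With cochains indexed by \emph{all} subsets $T\subseteq S$, the third group of terms (those with $\{s_{n-2},s_{n-1}\}\subseteq T$) does not cancel. So the pairing/Frobenius-reciprocity/telescoping argument is aimed at an identity that is false in your setup. Your own computation shows this for $V_T=V^{W_T}$. The sign-type choice ($V_T$ the subspace on which every $s\in T$ acts by $-1$) fails too: it gives $\chi(S_n,\mathbf{1})=1$ for all $n$, while the right-hand side is $1-1=0$. As written, the proposal does not prove the statement.

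The missing idea is the actual shape of the Coxeter complex in \cite{LL}. It is $C^j(W,V)=\bigoplus_T V^{W_T}$, where $T$ runs over $j$-element sets of \emph{pairwise commuting} simple reflections, so $W_T\cong(\mathbb{Z}/2)^{|T|}$. This is what the bar complex of $\mathbb{C}\oplus V\oplus S^2V$ produces. Merging adjacent tensor factors $i,i+1$ into $S^2V$ amounts to taking invariants under $s_i$. Merged pairs must be disjoint, since a block of three factors lands in degree $3$, which is zero. For $S_n$ this means $T$ never contains both $s_i$ and $s_{i+1}$.

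With this definition, $s_{n-1}\in T$ forces $s_{n-2}\notin T$, so your third group is empty. Your first two steps are then already the whole proof: split on whether $s_{n-1}\in T$, and write $V^{W_T}=(V^{\langle(n-1\ n)\rangle})^{W_{T'}}$ with $W_{T'}\subseteq S_{n-2}$ commuting with $(n-1\ n)$. This holds for $n\ge 2$, with $\chi(S_1,V)=\chi(S_0,V)=\dim V$ since the simple system is empty.

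Your test case now checks out. For $V$ trivial and $n=3$ the admissible sets are $\emptyset,\{s_1\},\{s_2\}$, giving $1-1-1=-1=\chi(S_2,\mathbf{1})-\chi(S_1,\mathbf{1})=0-1$. It also matches the base case of the paper's theorem: $\chi(S_n,\mathbf{1})$ is the independence polynomial of a path with $n-1$ vertices evaluated at $-1$, which is $6$-periodic. The all-subsets invariant version would instead give $0$ for every $n\ge 2$.
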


In order to apply this recurrence in practice, we must first describe both terms on the right hand side in terms of Young diagrams. The following result does so in the case where $V$ is an irreducible representation of $S_n$.

\begin{prop}[\cite{B}, Lemma 9]

Let $\lambda$ be a Young diagram of size $n$. Then

\begin{enumerate}

\item $V_{\lambda}|_{S_{n-1}} = \bigoplus _{\mu} V_{\mu}$, where the sum is over all Young diagrams $\mu$ obtained from $\lambda$ by removing a single box

\item $V_{\lambda}^{\langle \text{($n-1$ $n$)} \rangle} = \bigoplus _{\mu} V_{\mu}$, where the sum is over all Young diagrams $\mu$ obtained from $\lambda$ by removing two boxes, not both from the same column.

\end{enumerate}

\end{prop}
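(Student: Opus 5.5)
The plan is to prove the two parts in order, using the branching rule for part (1) and a Frobenius reciprocity and Pieri argument for part (2).

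Part (1) is the classical branching rule for symmetric groups. I would cite it as standard, for instance from the Okounkov–Vershik approach or from James' book. If a self-contained argument is wanted, use characteristic maps: restriction from $S_n$ to $S_{n-1}$ corresponds to applying $\partial/\partial p_1$ to Schur functions. Since $\partial s_\lambda/\partial p_1=\sum_{\mu=\lambda-\square} s_\mu$, this gives the multiplicity-free decomposition over all $\mu$ obtained from $\lambda$ by removing one box.

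For part (2), set $H=\langle (n-1\ n)\rangle\cong S_2$, which commutes with $S_{n-2}$. Then $V_\lambda^{H}$ is an $S_{n-2}$-representation: it is the isotypic component of $V_\lambda|_{S_{n-2}\times S_2}$ on which $S_2$ acts trivially. For a Young diagram $\mu$ of size $n-2$, I would compute the multiplicity of $V_\mu$ by Frobenius reciprocity:
\[\mathrm{Hom}_{S_{n-2}}\bigl(V_\mu, V_\lambda^{H}\bigr)\cong \mathrm{Hom}_{S_{n-2}\times S_2}\bigl(V_\mu\boxtimes \mathbf{1}, V_\lambda\bigr)\cong \mathrm{Hom}_{S_n}\bigl(\mathrm{Ind}_{S_{n-2}\times S_2}^{S_n}(V_\mu\boxtimes\mathbf{1}), V_\lambda\bigr).\]
By Pieri's rule, the induced representation is the multiplicity-free sum of $V_\lambda$ over all $\lambda$ such that $\lambda/\mu$ is a horizontal strip of size $2$, meaning no two of the added boxes lie in the same column. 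So the multiplicity is $1$ exactly when $\mu$ arises from $\lambda$ by removing two boxes not in the same column, and $0$ otherwise, which is the claim.

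As a cross-check that avoids Pieri, and which I would include if Pieri is not assumed, use Young's seminormal form. Apply part (1) twice: $V_\lambda|_{S_{n-2}}=\bigoplus_{\mu}\bigoplus_{\nu} V_\mu$, summed over chains $\mu\subset\nu\subset\lambda$. Then examine how $(n-1\ n)$ acts on the span of the corresponding Gelfand–Tsetlin vectors for a fixed $\mu$.
\begin{enumerate}
\item If the two boxes of $\lambda/\mu$ lie in the same row, there is a single chain and $(n-1\ n)$ acts by $+1$.
\item If they lie in the same column, there is a single chain and it acts by $-1$.
\item Otherwise there are two chains. On this $2$-dimensional space the transposition acts by the matrix $\begin{pmatrix} 1/r & \ast\\ \ast & -1/r\end{pmatrix}$, where $r$ is the content difference. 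It is an involution with trace $0$, hence not $\pm I$, so it has eigenvalues $+1$ and $-1$ and contributes exactly one invariant copy of $V_\mu$.
\end{enumerate}
Summing these cases gives the same answer.

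The main obstacle is case (3) of the seminormal computation, which requires correctly recalling the normalization of Young's seminormal form. The Frobenius–Pieri route sidesteps this, so I would use that route as the primary proof.
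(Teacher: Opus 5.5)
Your proof is correct, and it supplies something the paper does not: the paper gives no proof of this proposition. It quotes it from \cite{B} (Lemma 9) and uses it only as a black box, to turn the Euler-characteristic recurrence (Prop.~3) into a statement about Young diagrams, so there is no in-paper argument to compare against.

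Of your two routes, the Frobenius reciprocity plus Pieri argument is the more efficient. Identifying $V_\lambda^{\langle (n-1\ n)\rangle}$ with the part of $\mathrm{Res}^{S_n}_{S_{n-2}\times S_2}V_\lambda$ on which $S_2$ acts trivially reduces part (2) to the expansion of $s_\mu h_2$ as a sum of $s_\lambda$ over horizontal $2$-strips. The condition ``not both in the same column'' is exactly the horizontal-strip condition.

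The seminormal-form route costs more but yields more. It splits the whole restriction $V_\lambda|_{S_{n-2}}$ (part (1) applied twice) into the $+1$ and $-1$ eigenspaces of $(n-1\ n)$. As a result you also see that the sign-isotypic part is the sum over $\mu$ with $\lambda/\mu$ a vertical strip, which is a built-in consistency check.

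The normalization issue you flag as the main obstacle is not really one. Only the diagonal entries $\pm 1/r$ enter your trace argument, and diagonal entries do not change when the Gelfand--Tsetlin basis vectors are rescaled. They are forced by the Jucys--Murphy relation $(n-1\ n)X_{n-1}-X_n(n-1\ n)=-1$, so the seminormal and orthogonal normalizations give the same conclusion.
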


We give two examples to illustrate how these two results can be applied in practice. 

\begin{ex}

Let $\lambda = (6,4,2)$. Then $V_{\lambda}$ is a representation of $S_{12}$, and by Prop. 3, $\chi(V_{\lambda}) = \chi(V_{\lambda}|_{S_{11}}) - \chi(V_{\lambda}^{\langle (11 \text{ } 12) \rangle})$. We decompose both resulting terms using Prop. 4, both in terms of diagrams and the corresponding Euler characteristics. In Fig. \ref{fig:1}, the first row of each corresponds to $\chi(V_{\lambda}|_{S_{11}})$, and the second and third rows correspond to $\chi(V_{\lambda}^{\langle (11 \text{ } 12) \rangle})$.

\end{ex}

\begin{figure}
\begin{center}
\begin{tabular}{lcclclcl}
    
\begin{tikzpicture}
\def\n{6};
\foreach \x in {1,...,\n}{
\draw (.25*\x,0)--(.25*\x+.25,0)--(.25*\x+.25,.25)--(.25*\x,.25)--(.25*\x,0);
}
\def\nn{4};
\foreach \x in {1,...,\nn}{
\draw (.25*\x,-.25)--(.25*\x+.25,-.25)--(.25*\x+.25,0)--(.25*\x,0)--(.25*\x,-.25);
}
\def\nnn{2};
\foreach \x in {1,...,\nnn}{
\draw (.25*\x,-.5)--(.25*\x+.25,-.5)--(.25*\x+.25,-.25)--(.25*\x,-.25)--(.25*\x,-.5);
}
\end{tikzpicture} & $\rightarrow$ &&
\begin{tikzpicture}
\def\n{5};
\foreach \x in {1,...,\n}{
\draw (.25*\x,0)--(.25*\x+.25,0)--(.25*\x+.25,.25)--(.25*\x,.25)--(.25*\x,0);
}
\def\nn{4};
\foreach \x in {1,...,\nn}{
\draw (.25*\x,-.25)--(.25*\x+.25,-.25)--(.25*\x+.25,0)--(.25*\x,0)--(.25*\x,-.25);
}
\def\nnn{2};
\foreach \x in {1,...,\nnn}{
\draw (.25*\x,-.5)--(.25*\x+.25,-.5)--(.25*\x+.25,-.25)--(.25*\x,-.25)--(.25*\x,-.5);
}
\end{tikzpicture} &&
\begin{tikzpicture}
\def\n{6};
\foreach \x in {1,...,\n}{
\draw (.25*\x,0)--(.25*\x+.25,0)--(.25*\x+.25,.25)--(.25*\x,.25)--(.25*\x,0);
}
\def\nn{3};
\foreach \x in {1,...,\nn}{
\draw (.25*\x,-.25)--(.25*\x+.25,-.25)--(.25*\x+.25,0)--(.25*\x,0)--(.25*\x,-.25);
}
\def\nnn{2};
\foreach \x in {1,...,\nnn}{
\draw (.25*\x,-.5)--(.25*\x+.25,-.5)--(.25*\x+.25,-.25)--(.25*\x,-.25)--(.25*\x,-.5);
}
\end{tikzpicture} &&
\begin{tikzpicture}
\def\n{6};
\foreach \x in {1,...,\n}{
\draw (.25*\x,0)--(.25*\x+.25,0)--(.25*\x+.25,.25)--(.25*\x,.25)--(.25*\x,0);
}
\def\nn{4};
\foreach \x in {1,...,\nn}{
\draw (.25*\x,-.25)--(.25*\x+.25,-.25)--(.25*\x+.25,0)--(.25*\x,0)--(.25*\x,-.25);
}
\def\nnn{1};
\foreach \x in {1,...,\nnn}{
\draw (.25*\x,-.5)--(.25*\x+.25,-.5)--(.25*\x+.25,-.25)--(.25*\x,-.25)--(.25*\x,-.5);
}
\end{tikzpicture} \\ &&&&&&& \\
& & &
\begin{tikzpicture}
\def\n{4};
\foreach \x in {1,...,\n}{
\draw (.25*\x,0)--(.25*\x+.25,0)--(.25*\x+.25,.25)--(.25*\x,.25)--(.25*\x,0);
}
\def\nn{4};
\foreach \x in {1,...,\nn}{
\draw (.25*\x,-.25)--(.25*\x+.25,-.25)--(.25*\x+.25,0)--(.25*\x,0)--(.25*\x,-.25);
}
\def\nnn{2};
\foreach \x in {1,...,\nnn}{
\draw (.25*\x,-.5)--(.25*\x+.25,-.5)--(.25*\x+.25,-.25)--(.25*\x,-.25)--(.25*\x,-.5);
}
\end{tikzpicture} &&
\begin{tikzpicture}
\def\n{6};
\foreach \x in {1,...,\n}{
\draw (.25*\x,0)--(.25*\x+.25,0)--(.25*\x+.25,.25)--(.25*\x,.25)--(.25*\x,0);
}
\def\nn{2};
\foreach \x in {1,...,\nn}{
\draw (.25*\x,-.25)--(.25*\x+.25,-.25)--(.25*\x+.25,0)--(.25*\x,0)--(.25*\x,-.25);
}
\def\nnn{2};
\foreach \x in {1,...,\nnn}{
\draw (.25*\x,-.5)--(.25*\x+.25,-.5)--(.25*\x+.25,-.25)--(.25*\x,-.25)--(.25*\x,-.5);
}
\end{tikzpicture} &&
\begin{tikzpicture}
\def\n{6};
\foreach \x in {1,...,\n}{
\draw (.25*\x,0)--(.25*\x+.25,0)--(.25*\x+.25,.25)--(.25*\x,.25)--(.25*\x,0);
}
\def\nn{4};
\foreach \x in {1,...,\nn}{
\draw (.25*\x,-.25)--(.25*\x+.25,-.25)--(.25*\x+.25,0)--(.25*\x,0)--(.25*\x,-.25);
}
\end{tikzpicture} \\ &&&&&&& \\ &&&
\begin{tikzpicture}
\def\n{5};
\foreach \x in {1,...,\n}{
\draw (.25*\x,0)--(.25*\x+.25,0)--(.25*\x+.25,.25)--(.25*\x,.25)--(.25*\x,0);
}
\def\nn{3};
\foreach \x in {1,...,\nn}{
\draw (.25*\x,-.25)--(.25*\x+.25,-.25)--(.25*\x+.25,0)--(.25*\x,0)--(.25*\x,-.25);
}
\def\nnn{2};
\foreach \x in {1,...,\nnn}{
\draw (.25*\x,-.5)--(.25*\x+.25,-.5)--(.25*\x+.25,-.25)--(.25*\x,-.25)--(.25*\x,-.5);
}
\end{tikzpicture} &&
\begin{tikzpicture}
\def\n{5};
\foreach \x in {1,...,\n}{
\draw (.25*\x,0)--(.25*\x+.25,0)--(.25*\x+.25,.25)--(.25*\x,.25)--(.25*\x,0);
}
\def\nn{4};
\foreach \x in {1,...,\nn}{
\draw (.25*\x,-.25)--(.25*\x+.25,-.25)--(.25*\x+.25,0)--(.25*\x,0)--(.25*\x,-.25);
}
\def\nnn{1};
\foreach \x in {1,...,\nnn}{
\draw (.25*\x,-.5)--(.25*\x+.25,-.5)--(.25*\x+.25,-.25)--(.25*\x,-.25)--(.25*\x,-.5);
}
\end{tikzpicture} &&
\begin{tikzpicture}
\def\n{6};
\foreach \x in {1,...,\n}{
\draw (.25*\x,0)--(.25*\x+.25,0)--(.25*\x+.25,.25)--(.25*\x,.25)--(.25*\x,0);
}
\def\nn{3};
\foreach \x in {1,...,\nn}{
\draw (.25*\x,-.25)--(.25*\x+.25,-.25)--(.25*\x+.25,0)--(.25*\x,0)--(.25*\x,-.25);
}
\def\nnn{1};
\foreach \x in {1,...,\nnn}{
\draw (.25*\x,-.5)--(.25*\x+.25,-.5)--(.25*\x+.25,-.25)--(.25*\x,-.25)--(.25*\x,-.5);
}
\end{tikzpicture} \\ &&&&&&& \\
$\chi(V_{(6,4,2)})$& = && $\chi(V_{(5,4,2)})$ & + &$\chi(V_{(6,3,2)})$ & + & $\chi(V_{(6,4,1)})$ \\ &&&&&&& \\
&& $-$ & $\chi(V_{(4,4,2)})$ & $-$ & $\chi(V_{(6,2,2)})$ & $-$ & $\chi(V_{(6,4)})$ \\ &&&&&&& \\
&& $-$ & $\chi(V_{(5,3,2)})$ & $-$ & $\chi(V_{(5,4,1)})$ & $-$ & $\chi(V_{(6,3,1)})$ \\
\end{tabular}
\end{center}
\caption{Decomposition of $V_{(6, 4, 2)}$}
\label{fig:1}       
\end{figure}

When $\lambda$ has three rows, Prop. 4 gives up to nine possible terms in the decomposition, as shown in the previous example. As the number of rows of $\lambda$ increases, the number of possible terms increases quickly. However, the maximum number of terms is often not reached, as demonstrated in the following example.

\begin{ex}
Let $\lambda = (3,3,1)$. Then $V_{\lambda}$ is a representation of $S_{7}$, and by by Prop. 3, $\chi(V_{\lambda}) = \chi(V_{\lambda}|_{S_{6}}) - \chi(V_{\lambda}^{\langle (6 \text{ } 7) \rangle})$. We again decompose both resulting terms using Prop. 4, and again the first row of each portion of Fig. \ref{fig:2} corresponds to $\chi(V_{\lambda}|_{S_{6}})$,while the second and third rows correspond to $\chi(V_{\lambda}^{\langle (6 \text{ } 7) \rangle})$.

\end{ex}

\begin{figure}
\begin{center}
\begin{tabular}{lcclcl}
\begin{tikzpicture}
\def\n{3};
\foreach \x in {1,...,\n}{
\draw (.5*\x,0)--(.5*\x+.5,0)--(.5*\x+.5,.5)--(.5*\x,.5)--(.5*\x,0);
}
\def\nn{3};
\foreach \x in {1,...,\nn}{
\draw (.5*\x,-.5)--(.5*\x+.5,-.5)--(.5*\x+.5,0)--(.5*\x,0)--(.5*\x,-.5);
}
\def\nnn{1};
\foreach \x in {1,...,\nnn}{
\draw (.5*\x,-1)--(.5*\x+.5,-1)--(.5*\x+.5,-.5)--(.5*\x,-.5)--(.5*\x,-1);
}
\end{tikzpicture} & $\rightarrow$ &&
\begin{tikzpicture}
\def\n{3};
\foreach \x in {1,...,\n}{
\draw (.5*\x,0)--(.5*\x+.5,0)--(.5*\x+.5,.5)--(.5*\x,.5)--(.5*\x,0);
}
\def\nn{2};
\foreach \x in {1,...,\nn}{
\draw (.5*\x,-.5)--(.5*\x+.5,-.5)--(.5*\x+.5,0)--(.5*\x,0)--(.5*\x,-.5);
}
\def\nnn{1};
\foreach \x in {1,...,\nnn}{
\draw (.5*\x,-1)--(.5*\x+.5,-1)--(.5*\x+.5,-.5)--(.5*\x,-.5)--(.5*\x,-1);
}
\end{tikzpicture} &&
\begin{tikzpicture}
\def\n{3};
\foreach \x in {1,...,\n}{
\draw (.5*\x,0)--(.5*\x+.5,0)--(.5*\x+.5,.5)--(.5*\x,.5)--(.5*\x,0);
}
\def\nn{3};
\foreach \x in {1,...,\nn}{
\draw (.5*\x,-.5)--(.5*\x+.5,-.5)--(.5*\x+.5,0)--(.5*\x,0)--(.5*\x,-.5);
}
\end{tikzpicture} \\ &&&&& \\ &&&
\begin{tikzpicture}
\def\n{3};
\foreach \x in {1,...,\n}{
\draw (.5*\x,0)--(.5*\x+.5,0)--(.5*\x+.5,.5)--(.5*\x,.5)--(.5*\x,0);
}
\def\nn{1};
\foreach \x in {1,...,\nn}{
\draw (.5*\x,-.5)--(.5*\x+.5,-.5)--(.5*\x+.5,0)--(.5*\x,0)--(.5*\x,-.5);
}
\def\nnn{1};
\foreach \x in {1,...,\nnn}{
\draw (.5*\x,-1)--(.5*\x+.5,-1)--(.5*\x+.5,-.5)--(.5*\x,-.5)--(.5*\x,-1);
}
\end{tikzpicture} &&
\begin{tikzpicture}
\def\n{3};
\foreach \x in {1,...,\n}{
\draw (.5*\x,0)--(.5*\x+.5,0)--(.5*\x+.5,.5)--(.5*\x,.5)--(.5*\x,0);
}
\def\nn{2};
\foreach \x in {1,...,\nn}{
\draw (.5*\x,-.5)--(.5*\x+.5,-.5)--(.5*\x+.5,0)--(.5*\x,0)--(.5*\x,-.5);
}
\end{tikzpicture} \\ &&&&& \\
$\chi(V_{(3,3,1)})$ & = && $\chi(V_{(3,2,1)})$ & + & $\chi(V_{(3,3)})$ \\ &&&&& \\
&& $-$ & $\chi(V_{(3,1,1)})$ & $-$ & $\chi(V_{(3,2)})$ \\
\end{tabular}
\end{center}
\caption{Decomposition of $V_{(3,3,1)}$}
\label{fig:2}       
\end{figure}

\section{Main Results}
\label{res}

In \cite{B}, Bertrand works only with Young diagrams which have at most two rows, where the decomposition detailed above involves at most five resulting terms. This allows for  a very explicit recurrence to be derived from the decomposition. Because this is not feasible for larger diagrams, we take a slightly different approach. Thm. \ref{thm1} tells us that after fixing the number of boxes in the second row of a Young diagram, as we add boxes to the first row of the diagram, we cycle through one of three different polynomials, alternating signs. Here, we opt to use six different polynomials in order to omit the sign. As in Thm. \ref{thm1}, we expect that after fixing some portion of the diagram and subsequently adding boxes to the remainder, we should use the same polynomial for diagrams whose number of added boxes differs by a multiple of six. We find that, by analogue with \cite{B}, we may add boxes one at a time to the first row, requiring all other rows to be fixed. We proceed, then, by establishing a recurrence between diagrams whose first rows differ by exactly six boxes, and which are otherwise identical; this is the bulk of the proof of the resulting theorem. Subsequent analysis of the resulting recurrence yields the desired result, entirely analogous to Thm. \ref{thm1}.

\setcounter{thm}{1}

\begin{thm}
   Let $\lambda = (\lambda_1, \lambda_2, ..., \lambda_k)$ be a Young diagram of size $n$ where $\lambda_1 - \lambda_2 \geq 6$. Then there exist six polynomials, $p_0, p_1, p_2, p_3, p_4$, and $p_5$, each of degree at most $n - \lambda_1 = \sum_{i=2}^k \lambda_i$, such that \\ $\chi(H_C^*(S_n, V_{\lambda})) = p_i(n)$ where $n \equiv i \mod 6$.
\end{thm}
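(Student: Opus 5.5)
We argue by induction on $m = n - \lambda_1 = \sum_{i\ge 2}\lambda_i$, the number of boxes below the first row. Fix the lower part $\bar\lambda = (\lambda_2,\dots,\lambda_k)$ and write $f_{\bar\lambda}(a) = \chi(V_{(a,\bar\lambda)})$ for $a \ge \lambda_2$. The claim is then equivalent to the following statement: for $a - \lambda_2 \geq 6$, the function $f_{\bar\lambda}$ agrees, on each residue class of $a$ modulo $6$, with a polynomial of degree at most $m$. Since $n = a + m$ and $m$ is fixed, residues of $a$ and residues of $n$ determine each other.

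The key step is to iterate Prop. 3 together with Prop. 4. Applying them once to $\lambda = (a,\bar\lambda)$ gives
\[ f_{\bar\lambda}(a) = f_{\bar\lambda}(a-1) - f_{\bar\lambda}(a-2) + R_{\bar\lambda}(a). \]
The first two terms are the diagrams where the boxes come only from the first row; removing two boxes from row one is allowed because they lie in different columns, using $a - 2 \ge \lambda_2$. The remainder $R_{\bar\lambda}(a)$ is a signed sum of terms $f_{\bar\mu}(a')$ with $a' \in \{a, a-1\}$, where $\bar\mu$ is obtained from $\bar\lambda$ by removing one or two boxes, so $|\bar\mu| < m$. By the inductive hypothesis each such term is a quasi-polynomial of period $6$ in $a$ of degree at most $m-1$. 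This uses $a'-\mu_2 \ge 6$, which follows from $a-\lambda_2\ge 7$ in the worst case. For the boundary cases I would either strengthen the hypothesis to $\lambda_1 - \lambda_2 \geq 5$ in the induction, or check the finitely many edge cases directly.

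Next, compose the operator $T f(a) = f(a) - f(a-1) + f(a-2)$ with $f(a)+f(a-1)$. The product of the characteristic polynomials is $(x^2-x+1)(x+1) = x^3+1$, so
\[ f(a) + f(a-3) = R(a) + R(a-1). \]
Repeating once more with $x^3 - 1$ gives $x^6 - 1$, so
\[ f(a) - f(a-6) = Q(a), \]
where $Q$ is a period-$6$ quasi-polynomial of degree at most $m-1$. This is the recurrence between diagrams whose first rows differ by six, as the paper itself anticipates. Summing along each residue class shows that $f(a_0 + 6t) = f(a_0) + \sum_{s=1}^{t} Q(a_0+6s)$. On a fixed residue class, $Q$ is a genuine polynomial of degree at most $m-1$, so its partial sums form a polynomial in $t$, and hence in $n$, of degree at most $m$. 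This completes the inductive step.

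For the base case $m=0$, we have $\lambda = (n)$, the trivial representation, and $f(a) = f(a-1) - f(a-2)$. This sequence is $6$-periodic, so each $p_i$ is constant. Degree $0$ matches the bound.

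The main obstacle is bookkeeping the hypothesis $\lambda_1 - \lambda_2 \ge 6$ through the recursion. The remainder terms must lie in the range where the inductive hypothesis applies, and the telescoping needs $f$ at $a-6$, which requires $a - 6 - \lambda_2 \geq 6$ for the recurrence to stay in range. I would first try proving the stronger statement "valid for all $a \ge \lambda_2 + c$" for some small $c$. If that is not enough, I would handle the finitely many initial values on each residue class separately, which only shifts constants and leaves the polynomial degrees unaffected. A second point to check is that degree bookkeeping through $R(a)+R(a-1)$ and its shifts never raises degree, which holds because shifts preserve quasi-polynomial degree.
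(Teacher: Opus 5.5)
Your proposal is correct and follows the paper's proof. You use the same induction on $n-\lambda_1$ and the same one-step recurrence $f(a)=f(a-1)-f(a-2)+R(a)$. Your factorization $(x^2-x+1)(x+1)(x^3-1)=x^6-1$ is exactly the paper's chain of substitutions at first-row lengths $\lambda_1,\lambda_1-1,\lambda_1-3,\lambda_1-4$, and both arguments finish by summing a polynomial along each residue class mod $6$.

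The boundary bookkeeping you flag as the main obstacle, which the paper skips, closes without extra work. Take base points $a_0\in[\lambda_2+6,\lambda_2+11]$. Then the relation $f(a)-f(a-6)=Q(a)$ is only invoked at $a\ge\lambda_2+12$. There every term of $Q(a)$ has first row at least $a-5\ge\lambda_2+7>\mu_2+6$, so the inductive hypothesis applies exactly as stated. Your proposed strengthening to $\lambda_1-\lambda_2\ge 5$ (indeed to any threshold $c\ge 1$) would also induct cleanly by the same check, but it is not needed.
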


\begin{proof}

We proceed by inducting on $n-\lambda_1$, the number of boxes in all but the first row of $\lambda$. As shown in \cite{B}, and more explicitly in \cite{LL}, the statement is true when $n - \lambda_1=0$.

Given a diagram $\lambda = (\lambda_1, \lambda_2, ..., \lambda_k)$, we suppose that the theorem is true for all diagrams $\mu = (\mu_1, \mu_2, ..., \mu_l)$ for which the sum $\sum_{j=2}^l \mu_j$ is less than $n - \lambda_1$. We begin by establishing a recurrence relation which relates the values of $\chi(V_{(\lambda_1, ..., \lambda_k)})$ and $\chi(V_{(\lambda_1-6, ..., \lambda_k)})$, which we will obtain by repeatedly applying our decomposition. 

The diagrams corresponding to the terms which result from the decomposition will have fewer boxes than the original, and so the inductive hypothesis will apply to most of the resulting terms, allowing us to replace each of these terms with a polynomial. A priori, these may not be polynomials in $n$: the term corresponding to a diagram with one box fewer than the original would, for example, be a polynomial in $n-1$. However, algebraic simplification allows us to write each such term, and also their sum, as a single polynomial in $n$. By our hypothesis, the highest degree of any resulting polynomial is at most $n - \lambda_1 - 1$, and so their sum also has degree at most $n - \lambda_1 - 1$.

The only terms for which the inductive hypothesis does not apply are those for which all boxes are removed from the first row. To simplify our calculations, we will therefore focus only on these exceptions, each of the form $V_{(*, \lambda_2, ..., \lambda_k)}$, and assume that all remaining terms comprise a single additional polynomial in $n$, which we denote by $\varepsilon$ (note that $\varepsilon$ varies line to line). We apply the decomposition repeatedly, as follows:

\begin{center}
    \begin{tabular}{lcl}
        $\chi(V_{(\lambda_1, ..., \lambda_k)})$ & = & $\chi(V_{(\lambda_1-1, ..., \lambda_k)}) - \chi(V_{(\lambda_1-2, ..., \lambda_k)}) + \varepsilon$ \\
        & = & $\chi(V_{(\lambda_1-2, ..., \lambda_k)}) - \chi(V_{(\lambda_1-3, ..., \lambda_k)}) -\chi(V_{(\lambda_1-2, ..., \lambda_k)}) + \varepsilon$ \\
         & = & $-\chi(V_{(\lambda_1-3, ..., \lambda_k)}) + \varepsilon$ \\
         & = & $-\chi(V_{(\lambda_1-4, ..., \lambda_k)}) + \chi(V_{(\lambda_1-5, ..., \lambda_k)}) + \varepsilon$ \\
         & = & $-\chi(V_{(\lambda_1-5, ..., \lambda_k)}) + \chi(V_{(\lambda_1-6, ..., \lambda_k)}) + \chi(V_{(\lambda_1-5, ..., \lambda_k)}) + \varepsilon$ \\
         & = & $\chi(V_{(\lambda_1-6, ..., \lambda_k)}) + \varepsilon$ \\
    \end{tabular}
\end{center}

Since a given $p_i$ is used only for values of $n$ that are equivalent to $i$ mod 6, we define $m$ by $n=6m+i$; this way, values of $n$ which differ by six correspond to consecutive values of $m$. Therefore, $\chi(V_{(\lambda_1, ..., \lambda_k)})$ and $\chi(V_{(\lambda_1-6, ..., \lambda_k)})$ are consecutive with respect to $m$, so we let \\ $a_m = \chi(V_{(\lambda_1, ..., \lambda_k)})$. From above, $\chi(V_{(\lambda_1, ..., \lambda_k)})$ is equal to $\chi(V_{(\lambda_1-6, ..., \lambda_k)})$ plus some polynomial in $n$: call this polynomial $q(n)$. We may then write the equation above as the recurrence $a_m = a_{m-1} + q(n)$; by applying the change of coordinates $n=6m+i$, we rewrite $q(n)$ as a polynomial $\hat{q}$ in $m$. This recurrence is then equivalent to 

\[a_m - a_{m-1} = \hat{q}(m). \]

Let the closed form for the $a_i$ be given by the function $f(i)$. Note that the left hand side of the equation above is then equal to \\ $f(m) - f(m-1) = \Delta(f)$, the discrete derivative of $f$. This then gives

\[ \Delta f = \hat{q}(m).\]

By the power rule for discrete calculus (see, for example, \cite{CHR}), we have

\[ f = \hat{Q}(m) \]

for some polynomial $\hat{Q}$. Further, since $q$ (and also $\hat{q}$) had degree at most $n-\lambda_1 -1$, $\hat{Q}$ has degree at most $n  - \lambda_1$. By again applying a change of coordinates, we may rewrite $\hat{Q}$ as a polynomial $Q$ in $n$, which also has degree at most $n - \lambda_1$.

Thus, for given values of $\lambda_1, ..., \lambda_k$ and a given value of $n \mod 6$, $\chi(V_{(\lambda_1, ..., \lambda_k)})$ is given by a polynomial in $n$ of degree at most $n- \lambda_1$. It follows that there may be six distinct such polynomials, one for each value of $n \mod 6$.

\end{proof}

\bibliographystyle{plain}
\bibliography{bibtex}

\end{document}